\newcommand{\TITLE}{Radical Dynamical Monogenicity}
\newcommand{\TITLERUNNING}{}
\theoremstyle{plain}
\newtheorem{theorem}{Theorem}
\newtheorem{proposition}[theorem]{Proposition}
\newtheorem{lemma}[theorem]{Lemma}
\newtheorem{corollary}[theorem]{Corollary}
\theoremstyle{definition}
\theoremstyle{remark}
\newtheorem{example}[theorem]{Example}
\newtheorem{question}[theorem]{Question}
\numberwithin{theorem}{section}
\newcommand{\tightoverset}[2]{%
  \mathop{#2}\limits^{\vbox to -.5ex{\kern-1.05ex\hbox{$#1$}\vss}}}
\numberwithin{equation}{section} 
\newcommand{\gm}{{\mathfrak{m}}}
\newcommand{\gn}{{\mathfrak{n}}}
\newcommand{\gp}{{\mathfrak{p}}}
\newcommand{\gP}{{\mathfrak{P}}}
\newcommand{\gl}{{\mathfrak{l}}}
\def\Ocal{{\mathcal O}}
\newcommand{\FF}{\mathbb{F}}
\newcommand{\QQ}{\mathbb{Q}}
\newcommand{\ZZ}{\mathbb{Z}}
\newcommand{\tensor}{\otimes}
\newcommand{\dnd}{\nmid}
\newcommand{\ol}[1]{\overline{#1}}
\newcommand{\Disc}{\operatorname{Disc}}
\newcommand{\ind}{\operatorname{ind}}
\newcommand{\Norm}{\operatorname{Norm}}
\newcommand{\red}{\operatorname{red}}
\title[\TITLERUNNING]{\TITLE}
\author[Hanson Smith]{Hanson Smith}
\address{Department of Mathematics\\
California State University San Marcos\\
333 S. Twin Oaks Valley Rd.\\
San Marcos, CA 92096}
\email{hsmith@csusm.edu}
\keywords{Monogenic, Power integral basis, Radical extension}
\subjclass[2020]{11R04, 11R21, 37P05}
\begin{document}

\sloppy 


\baselineskip=17pt


\begin{abstract}
Let $a$ be an integer and $p$ a prime so that $f(x)=x^p-a$ is irreducible. Write $f^n(x)$ to indicate the $n$-fold composition of $f(x)$ with itself. We study the monogenicity of number fields defined by roots of $f^n(x)$ and give necessary and sufficient conditions for a root of $f^n(x)$ to yield a power integral basis for each $n\geq 1$.
\end{abstract}

\maketitle

\section{Results and Previous Work}
Let $L$ be a number field. We will denote the ring of integers by $\Ocal_L$. Suppose $M$ is a finite extension of $L$. If $\Ocal_M=\Ocal_L[\alpha]$ for some $\alpha\in \Ocal_M$, then we say \emph{$M$ is monogenic over $L$} or $\Ocal_M$ has a \emph{power $\Ocal_L$-integral basis.} In this case, we call $\alpha$ a \textit{monogenerator}. If $f(x)$ is the minimal polynomial of $\alpha$ over $L$, then we also call $f(x)$ \textit{monogenic}. 
When $L$ is $\QQ$ we will simply say $M$ is \emph{monogenic} or $\Ocal_M$ has a \emph{power integral basis.} 

If $\gp$ is a prime of $\Ocal_L$, then we write $(\Ocal_L)_\gp$ for the localization of $\Ocal_L$ at $\gp$. Given an $\Ocal_M$-order $R$, we say $R$ is \textit{$\gp$-maximal} if $R\tensor (\Ocal_L)_\gp \cong \Ocal_M\tensor (\Ocal_L)_\gp$. Indeed, if $R$ is $\gp$-maximal for each prime $\gp$ of $\Ocal_L$, the $R$ is the maximal order $\Ocal_M$.

If $f(x)\in\ZZ[x]$ is a polynomial, then we write 
\[f^n=\underbrace{f\circ f\circ \cdots \circ f}_{n\text{-times}}.\]
This paper investigates the monogenicity of extensions obtained by adjoining a root of $f^n(x)$ where $f(x)=x^p-a$ is an irreducible integer polynomial with $p$ a prime number. In particular, we prove the following.


\begin{theorem}\label{Thm: Main}
Write $\alpha_n$ for a root of $f^n(x)$, and let $K_n$ denote $\QQ(\alpha_n)$. Fix a natural number $N\geq 1$. The ring of integers $\Ocal_{K_n}$ is $\ZZ[\alpha_n]$ for all $n\leq N$ if and only if $a^p\not \equiv a \bmod p^2$ and $f^n(0)$ is squarefree for all $n\leq N$.
\end{theorem}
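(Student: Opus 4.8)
The plan is to reduce the claim to a prime-by-prime statement: $\Ocal_{K_n}=\ZZ[\alpha_n]$ if and only if $\ZZ[\alpha_n]$ is $q$-maximal for every prime $q$, and to resolve each prime with Dedekind's criterion, exploiting the identity $f^n(x)=f^{n-1}(x)^p-a$ (equivalently $(f^n)'(x)=p^n\prod_{k=0}^{n-1}(f^k(x))^{p-1}$). The first step is to locate the relevant primes. Since $f^k(\alpha_n)$ is a root of $f^{n-k}$ and so generates a copy of $K_{n-k}$ inside $K_n$, with $\Norm_{K_n/\QQ}\big(f^k(\alpha_n)\big)=\big((-1)^{p^{n-k}}f^{n-k}(0)\big)^{p^k}$, the derivative formula gives
\[
\disc(f^n)=\pm\,p^{np^n}\prod_{j=1}^{n}f^j(0)^{(p-1)p^{n-j}}.
\]
Hence the only primes at which $\ZZ[\alpha_n]$ can fail to be maximal are $p$ and the primes dividing some $f^j(0)$ with $j\le n$, and $\Ocal_{K_n}=\ZZ[\alpha_n]$ for all $n\le N$ exactly when $\ZZ[\alpha_n]$ is $q$-maximal at each such prime for each $n\le N$. (Throughout I take for granted that $f^n$ is irreducible, as is classical for $x^p-a$ / established earlier.)

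For the prime $p$: reducing $f^n(x)=f^{n-1}(x)^p-a$ modulo $p$ and using Fermat repeatedly gives $f^n(x)\equiv x^{p^n}-na\equiv (x-na)^{p^n}\bmod p$. Running Dedekind's criterion with $g=x-na$ and $h=(x-na)^{p^n-1}$, one finds $\ZZ[\alpha_n]$ is $p$-maximal precisely when $v_p\big(f^n(na)\big)=1$. Now $f^k(na)\equiv(n-k)a\bmod p$ for $0\le k<n$ (induct on $k$ via $y^p\equiv y\bmod p$), so $f^{n-1}(na)\equiv a\bmod p$; since $(a+pt)^p\equiv a^p\bmod p^2$, one more application of $f$ yields $f^n(na)=f^{n-1}(na)^p-a\equiv a^p-a\bmod p^2$. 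Therefore $v_p(f^n(na))=1$ for every $n\ge1$ if and only if $v_p(a^p-a)=1$, i.e. $a^p\not\equiv a\bmod p^2$. (The same congruence shows $a^p\not\equiv a\bmod p^2$ already forces $v_p(f^j(0))\le 1$, so the $p$-part of the squarefreeness of the $f^j(0)$ is subsumed by the condition on $a$.)

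For a prime $q\ne p$ dividing some $f^j(0)$: I would study $\bar f:=f\bmod q$ through the forward orbit of its unique critical point $0$, i.e. the sequence $O_j:=\overline{f^j(0)}$, which is periodic of some period $\pi\ge1$ (with $\pi=1$ exactly when $q\mid a$). Because $(\bar f^n)'=p^n\prod_k(\bar f^k)^{p-1}$ and $q\nmid p$, a root $\beta$ of $\bar f^n$ is repeated only if some $\bar f^k(\beta)=0$; taking $k$ minimal forces $O_{n-k}=0$, i.e. $q\mid f^{n-k}(0)$, and since near $0$ the map $\bar f$ behaves like $\varepsilon\mapsto\varepsilon^p+\bar f(0)$, the multiplicity of such a $\beta$ in $\bar f^n$ works out to $p^{(n-k)/\pi}$. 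Feeding this into Dedekind's criterion factor by factor — the repeated irreducible factors being sorted by the level $k$ at which their roots first hit $0$ — the $q$-maximality of $\ZZ[\alpha_n]$ comes down to $v_q(f^{n-k}(0))=1$ at every level $k$ carrying a repeated factor, that is, $v_q(f^j(0))\le 1$ for all $j\le n$. Conversely, if $q^2\mid f^j(0)$ for some $j\le N$, then taking $n=j$ the orbit analysis exhibits a repeated factor of $\bar f^j$ dividing the Dedekind polynomial, so $\ZZ[\alpha_j]\ne\Ocal_{K_j}$.

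Assembling the three parts, $\Ocal_{K_n}=\ZZ[\alpha_n]$ for all $n\le N$ if and only if $\ZZ[\alpha_n]$ is $q$-maximal at every prime for every $n\le N$, if and only if $a^p\not\equiv a\bmod p^2$ together with $v_q(f^j(0))\le 1$ for every prime $q$ and every $j\le N$ — which is exactly "$a^p\not\equiv a\bmod p^2$ and $f^j(0)$ squarefree for all $j\le N$". The main obstacle I anticipate is the case $q\ne p$: one must bookkeep how the repeated factors of $\bar f^n$ distribute over the levels $k$, and how each local instance of Dedekind's criterion collapses to the single clean requirement $v_q(f^j(0))\le 1$ — in particular handling orbits of $0$ that return to $0$ several times before step $n$, and the degenerate case $q\mid a$ where $0$ is a superattracting fixed point of $\bar f$.
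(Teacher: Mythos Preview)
Your overall architecture—localize at each prime and treat $p$ separately from $q\ne p$—is the paper's as well, and your discriminant computation correctly isolates the relevant primes. Your treatment of the prime $p$ is correct and genuinely different: you run Dedekind's criterion directly on $f^n$ over $\ZZ$, observing that $f^n(x)\equiv (x-na)^{p^n}\bmod p$ and that $f^n(na)\equiv a^p-a\bmod p^2$, which gives the equivalence with $a^p\not\equiv a\bmod p^2$ for every $n$ simultaneously. The paper instead climbs the tower $K_{n-1}\subset K_n$ inductively, computing the $\gp_n$-adic valuation of $\alpha_n^p-\alpha_n$ via the $(x-\alpha_{n-1}-a)$-adic development and Newton polygons; your route is shorter and avoids the relative machinery entirely.

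The genuine gap is the sufficiency direction for $q\ne p$, precisely where you flag it. You assert that Dedekind's criterion on $f^n$ over $\ZZ$, applied factor by factor, ``comes down to $v_q(f^{n-k}(0))=1$ at every level $k$,'' but this reduction is not carried out, and the bookkeeping is substantial: for each repeated irreducible factor $\bar\phi$ of $\bar f^n$ one must control the full principal $\phi$-polygon (equivalently the full Dedekind remainder), and the levels interact since a root at level $k$ reappears at levels $k+\pi,k+2\pi,\dots$. Your phrasing also suggests a biconditional for each fixed $n$, but only the implication ``$v_q(f^j(0))\le 1$ for all $j\le n\Rightarrow\ZZ[\alpha_n]$ is $q$-maximal'' is needed for the theorem; the converse for a single $n$ is a stronger statement that you neither need nor justify. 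The paper bypasses all of this by working relatively: viewing $K_n=K_{n-1}(\sqrt[p]{\alpha_{n-1}+a})$ as a degree-$p$ radical extension, it invokes the criterion that $\Ocal_{K_{n-1}}[\alpha_n]$ is $\gl$-maximal iff $\gl^2\nmid(\alpha_{n-1}+a)$, then takes norms down to $\QQ$ to turn $\gl^2\mid(\alpha_{n-1}+a)$ into $\ell^2\mid f^n(0)$, and finishes by induction up the tower using $\alpha_{n-1}=\alpha_n^p-a\in\ZZ[\alpha_n]$. That single relative-plus-norm step is what makes the $q\ne p$ case clean; your direct-over-$\ZZ$ approach may well be completable, but the orbit and multiplicity analysis you anticipate is not yet done.
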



\subsection{Previous Work}

The literature regarding monogenic fields is vast. A recent text on monogenicity that focuses on using index form equations is Ga\'al's book \cite{GaalsBook}. 
Another modern resource is Evertse and Gy\H{o}ry's book \cite{EvertseGyoryBook}. Radical extensions\footnote{These extensions are also called \textit{pure extensions}.} also have a very extensive body of previous work. We will not undertake a general survey of the literature involving radical extensions and monogenicity here; the curious reader can see \cite{SmithRadical} for this.


Our investigation is inspired by \cite{Ruofan} where Li studies iterates of $f(x)=x^2-a$. Li uses novel arguments to give necessary and sufficient conditions for a root of $f^n(x)$ to be a monogenerator for all $n\leq N$, a fixed positive integer. We extend Li's results to $x^p-a$ with $p$ an odd prime.

Other authors have studied the monogenicity of iterates of polynomials defining radical extensions. In \cite{JonesStable}, 
Jones considers $h(x)=(x-t)^m+t$ for $m\geq 2$ and $t\geq 1$. This polynomial yields the same extension as $g(x)=x^m+t$; however, the fields defined by roots of $h^n(x)$ and $g^n(x)$ differ for $n\geq 2$. Indeed, $h^n(x)$ yields the same number field as $x^{m^n}+t$ for all $n\geq 1$, but $g^n(x)$ does not. 


Given the simple shape of radical polynomials and the fact that $x^d+c$, up to a change of variables, represents all polynomials of degree $d$ with exactly one finite cricital point, it is not surprising that a number of authors have studied the dynamics of these polynomials. We mention a few papers which are adjacent to our note. In \cite{FiniteIndexGaloisGroups}, the authors investigate the Galois groups of iterates of $x^q+c$. The work in \cite{FeinSchacher} studies the basic properties of iterates and composites of polynomials, finding properties such as irreducibility, separability, complete splitting, and solubility by radicals are not necessarily preserved. The specific case of irreducibility of iterates of a radical polynomial $x^d+c$ is considered in \cite{DanielsonFein}. They investigate irreducibility over a wide class of fields, but of primary interest to us will be their Corollary 5, which shows that if $h(x)=x^d+c$ is irreducible in $\ZZ[x]$, then so are its iterates.


\section{Background: The Montes Algorithm and Ore's Theorems}

The Montes algorithm is a powerful $p$-adic factorization algorithm that is based on and extends the pioneering work of {\O}ystein Ore \cite{Ore}. We do not need the full strength of the Montes algorithm here. In fact, though we use the notation and setup of the general implementation, we will only make use of the aspects developed by Ore. The following is a brief summary of the tools we will utilize; for the complete development of the Montes algorithm, see \cite{GMN}. Our notation will roughly follow \cite{ElFadilMontesNart}, which gives a more extensive summary than we undertake here.

Let $p$ be an integral prime, $K$ a number field with ring of integers $\Ocal_K$, and $\gp$ a prime of $K$ above $p$. Write $K_\gp$ to denote the completion of $K$ at $\gp$. Suppose we have a monic, irreducible polynomial $f(x)\in \Ocal_K[x]$. We extend the standard $\gp$-adic valuation to $\Ocal_K[x]$ by defining the $\gp$-adic valuation of $f(x) = a_n x^n + \cdots + a_1 x + a_0 \in \Ocal_K[x]$ to be 
	\[ v_\gp\big(f(x)\big) = \min_{0 \leq i \leq n} \big( v_\gp(a_i) \big). \]
This is often called the \textit{Gauss valuation.}
If $\phi(x), f(x) \in \Ocal_K[x]$ are such that $\deg \phi \leq \deg f$, then we can write
    	\[f(x)=\sum_{i=0}^k a_i(x)\phi(x)^i,\]
for some $k$, where each $a_i(x) \in \Ocal_K[x]$ has degree less than $\deg \phi$. We call the above expression the \emph{$\phi$-adic development} of $f(x)$. We associate to the $\phi$-adic development of $f(x)$ an open Newton polygon by taking the lower convex hull 
of the integer lattice points $\big(i,v_p(a_i(x))\big)$. The sides of the Newton polygon with negative slope are the \emph{principal $\phi$-polygon}. 
The positive integer lattice points on or under the principal $\phi$-polygon contain a wealth of arithmetic information. We denote the number of such lattice points by $\ind_\phi(f)$.

Write $k_\gp$ for the field $\Ocal_K/\gp$, and let $\ol{f(x)}$ be the image of $f(x)$ in $k_\gp[x]$. It will often be the case that we develop $f(x)$ with respect to an irreducible factor $\phi(x)$ of $\ol{f(x)}$. In this situation, we will want to consider the extension of $k_\gp$ obtained by adjoining a root of $\phi(x)$. We denote this finite field by $k_{\gp,\phi}$. We associate to each side of the principal $\phi$-polygon a polynomial in $k_{\gp,\phi}[y]$. Suppose $S$ is a side of the principal $\phi$-polygon with initial vertex $\big(s,v_\gp(a_s(x))\big)$, terminal vertex $\big(k,v_\gp(a_k(x))\big)$, and slope $-\frac{h}{e}$ written in lowest terms. Define the length of the side to be $l(S)=k-s$ and the degree to be $d\coloneqq\frac{l(S)}{e}$. Let $\red:\Ocal_K[x]\to k_{\gp,\phi}$ denote the homomorphism obtained by quotienting by the ideal $\big(\gp,\phi(x)\big)$.
For each $i$ in the range $b\leq i\leq k$, we define the residual coefficient to be
\[c_i=\left\{
\begin{array}{ll}
0 \text{ if }  \big(i,v_\gp(a_i(x))\big)  \text{ lies strictly above } S  \text{ or } v_\gp(a_i(x))=\infty,\\
\red\left(\frac{a_i(x)}{\pi^{v_\gp(a_i(x))}}\right)  \text{ if }  \big(i,v_\gp(a_i(x))\big) \text{ lies on } S.
\end{array}
\right.\]
Finally, the \emph{residual polynomial} of the side $S$ is the polynomial
\[R_S(y)=c_s+c_{s+e}y+\cdots +c_{s+(d-1)e}y^{d-1}+c_{s+de}y^d\in k_{\gp,\phi}[y].\]
Notice, that $c_s$ and $c_{s+de}$ are always nonzero since they are the initial and terminal vertices, respectively, of the side $S$.

With all of these definitions in hand, we package everything into two theorems that encapsulate how we will employ the Montes algorithm. The first focuses on the indices of monogenic orders.

\begin{theorem}[Ore's theorem of the index]\label{Thmofindex}
Let $f(x)\in \Ocal_K[x]$ be a monic irreducible polynomial and let $\alpha$ be a root. Choose monic polynomials $\phi_1,\dots, \phi_s \in \Ocal_K[x]$ whose reductions modulo $\gp$ are exactly the distinct irreducible factors of $\overline{f(x)} \in k_\gp[x]$. Then, 
\[v_p([\Ocal_{K(\alpha)}:\Ocal_K[\alpha]])\geq \ind_{\phi_1}(f)+\cdots + \ind_{\phi_s}(f).\]
Further, equality holds if, for every $\phi_i$, each side of the principal $\phi_i$-polygon has a separable residual polynomial.
\end{theorem}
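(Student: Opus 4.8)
\emph{Proof strategy.} The plan is to reduce the statement, through a chain of localizations and factorizations, to a single segment of a single Newton polygon carrying an irreducible residual polynomial, and to settle that base case by an explicit description of the maximal order. First I would localize: $v_p\big([\Ocal_{K(\alpha)}:\Ocal_K[\alpha]]\big)$ is computed after tensoring with $(\Ocal_K)_\gp$, and $\Ocal_{K(\alpha)}\otimes_{\Ocal_K}(\Ocal_K)_\gp$ is the product of the rings of integers of the completions $K(\alpha)_\gP$, $\gP\mid\gp$, matching a Hensel factorization $f=\prod_{\gP}f_\gP$ over $K_\gp$ in which each $\overline{f_\gP}$ is a power of a single irreducible of $k_\gp[x]$. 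Because the $\overline{\phi_i}$ are pairwise coprime they distribute one-to-one among the $\overline{f_\gP}$, the $\gp$-valuation of the index is additive along this product, and $\ind_{\phi_i}(f)=\ind_{\phi_i}(f_\gP)$ for the matching $\gP$; so it suffices to treat the completion $K_\gp$ with a single $\phi=\phi_1$ and $\overline f=\overline{\phi}^{\,m}$.

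Next I would split $f$ along its principal $\phi$-polygon. Ore's theorem of the polygon --- the first-order step of the Montes algorithm, which I would take as an input; see \cite{Ore,GMN} --- gives $f=\prod_S f_S$ over $K_\gp$ with one monic factor for each side $S$, where $\deg f_S=l(S)\deg\phi$ and the $\phi$-polygon of $f_S$ is the single segment $S$, of slope $-h/e$ in lowest terms and length $de$. Multiplicativity of the discriminant, $\disc(gh)=\disc(g)\disc(h)\operatorname{Res}(g,h)^2$, together with the elementary ``additivity up to overlap rectangles'' of the lattice count, $\ind_\phi(f)=\sum_S\ind_\phi(f_S)+(\text{the analogous resultant terms})$, reduces the whole claim to a single segment. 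In the equality case, where every residual polynomial is assumed separable, I would refine once more by Ore's theorem of the residual polynomial: a separable $R_S=\prod_t\psi_t$ gives $f_S=\prod_t f_{S,t}$ with each $f_{S,t}$ irreducible over $K_\gp$ of ramification index $e$ and residue degree $(\deg\psi_t)\deg\phi$, so the equality case reduces to a single segment whose residual polynomial is irreducible.

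In the base case (single segment of slope $-h/e$, length $de$) I would prove two things. The inequality $v_p\big([\Ocal_{K(\alpha)}:\Ocal_K[\alpha]]\big)\ge\ind_\phi(f)$: for each lattice point $(i,j)$ with $1\le i$, $1\le j$ lying on or below the segment, write down an element of $\Ocal_{K(\alpha)}$ of the shape $\phi(\alpha)^{\,i}\cdot(\text{unit correction})/\pi^{\,j}$ --- the correction chosen so the numerator is divisible by $\pi^{\,j}$ --- and check that these, together with $1,\alpha,\dots,\alpha^{\deg f-1}$, span an $\Ocal_K$-module lying between $\Ocal_K[\alpha]$ and $\Ocal_{K(\alpha)}$ whose index over $\Ocal_K[\alpha]$ has $\gp$-valuation exactly $\ind_\phi(f)$. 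For the reverse inequality when $R_S$ is irreducible: the segment and residual data already exhibit $f$ as irreducible over $K_\gp$ with ramification index $e$ and residue degree $(\deg R_S)\deg\phi$, and a change of variables turns the relevant factor into an Eisenstein polynomial over the maximal unramified subextension; comparing $v_p(\disc f)$ (read off the segment) with $v_p(\disc\Ocal_{K(\alpha)})$ (read off the different of this possibly wild extension) shows $v_p(\disc f)-v_p(\disc\Ocal_{K(\alpha)})=2\ind_\phi(f)$, i.e. the module built above is already all of $\Ocal_{K(\alpha)}$, so equality holds. Running the reductions backward then gives the asserted bound in general, with equality once every residual polynomial of every $\phi_i$-polygon is separable.

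I expect the genuine difficulty to be exactly this last point: proving that separability of the residual polynomials is what makes the first-order polygon data ``see all'' of $\Ocal_{K(\alpha)}$ --- equivalently, that the Eisenstein-type reduction above really produces the maximal order, with the wildly ramified case being the delicate part of the different/discriminant bookkeeping. The clean way to organize the interplay of slopes, residual degrees and wild contributions is through MacLane's theory of inductive key valuations, which is the machinery the Montes algorithm is built on and which I would invoke rather than reprove; everything upstream --- the localization, the Hensel and polygon factorizations, the discriminant multiplicativity, and the lattice-point combinatorics --- is then routine.
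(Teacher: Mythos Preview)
The paper does not prove this theorem: it is stated in Section~2 purely as background, with the reader directed to \cite{Ore}, \cite{GMN}, \cite{ElFadilMontesNart}, and \cite{JhorarKhanduja} for the details. There is therefore no ``paper's proof'' to compare your proposal against.

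That said, your sketch is essentially the classical route taken in those references: localize and complete at $\gp$, Hensel-factor along the distinct $\overline{\phi_i}$, then along the sides of each principal $\phi_i$-polygon, and finally handle a single segment by exhibiting an explicit $\Ocal_K$-lattice between $\Ocal_K[\alpha]$ and $\Ocal_{K(\alpha)}$ and comparing discriminants via the different. One phrasing to tighten: your ``one-to-one'' distribution of the $\overline{\phi_i}$ among the $\overline{f_\gP}$ is not literally correct, since several primes $\gP\mid\gp$ can sit over the same $\phi_i$; what you actually want at that step is the coarser Hensel factorization $f=\prod_i F_i$ with $\overline{F_i}=\overline{\phi_i}^{\,r_i}$, after which $\ind_{\phi_j}(F_i)=0$ for $j\ne i$ and the resultants $\operatorname{Res}(F_i,F_j)$ are $\gp$-units, so the reduction to a single $\phi$ goes through cleanly. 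Your identification of the genuine difficulty---that separability of the residual polynomials is precisely what guarantees the first-order polygon data already captures the full maximal order, organized either through MacLane/Okutsu--Montes inductive valuations or via an Eisenstein reduction and the conductor--different formula---is accurate and matches how \cite{GMN} and \cite{ElFadilMontesNart} structure the argument.
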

One should consult \cite{JhorarKhanduja} for a generalization of the above theorem. For our applications, we will employ the following equivalence.

\begin{corollary}\label{iffCor}
The prime $\gp$ does not divide $[\Ocal_{K(\alpha)}:\Ocal_K[\alpha]]$ if and only if $\ind_{\phi_i}(f)=0$ for all $i$. In this case each principal $\phi_i$-polygon is one-sided.
\end{corollary}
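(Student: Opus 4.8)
The plan is to derive the equivalence directly from Ore's theorem of the index (Theorem~\ref{Thmofindex}), supplemented by elementary bookkeeping on the principal $\phi_i$-polygons. It will be convenient to recall two structural features of a principal $\phi$-polygon $N$ attached to our monic irreducible $f$ and a factor $\phi=\phi_i$: since $\ol{\phi}$ divides $\ol{f}$ we have $v_\gp(a_0)\ge 1$, so $N$ is a chain of sides of strictly negative slope running from $\big(0,v_\gp(a_0)\big)$ down to $(\omega,0)$, where $\omega=\ord_{\ol{\phi}}(\ol{f})$; and $\ind_\phi(f)$ counts the lattice points $(i,j)$ with $i\ge 1$ and $j\ge 1$ lying on or below $N$.

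For the forward implication, Theorem~\ref{Thmofindex} gives $v_p\big([\Ocal_{K(\alpha)}:\Ocal_K[\alpha]]\big)\ge \ind_{\phi_1}(f)+\cdots+\ind_{\phi_s}(f)$, a sum of nonnegative integers. Hence if $\gp$ does not divide $[\Ocal_{K(\alpha)}:\Ocal_K[\alpha]]$, i.e.\ $v_p\big([\Ocal_{K(\alpha)}:\Ocal_K[\alpha]]\big)=0$, then every $\ind_{\phi_i}(f)$ vanishes.

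For the converse I would show that $\ind_{\phi_i}(f)=0$ forces the principal $\phi_i$-polygon to be one-sided with a linear residual polynomial, after which the equality clause of Theorem~\ref{Thmofindex} finishes the argument. First, if the $\phi_i$-polygon had two consecutive sides, their shared vertex would be a lattice point at abscissa strictly between $0$ and $\omega$ and at height at least $1$ (a height-$0$ interior vertex would force the next side to have nonnegative slope, which the principal polygon excludes), and such a point is counted by $\ind_{\phi_i}(f)$; so the polygon is a single side $S$. Writing the slope of $S$ as $-h/e$ in lowest terms and its degree as $d$, one has $\omega=de$ and $v_\gp(a_0)=dh$, and the lattice points lying on $S$ are exactly $\big(ke,(d-k)h\big)$ for $0\le k\le d$; the $d-1$ interior ones all have both coordinates at least $1$, so $\ind_{\phi_i}(f)\ge d-1$. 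Thus $d=1$, the residual polynomial of $S$ is linear and hence separable, and Theorem~\ref{Thmofindex} yields $v_p\big([\Ocal_{K(\alpha)}:\Ocal_K[\alpha]]\big)=\sum_i\ind_{\phi_i}(f)=0$. The one-sidedness established along the way gives the final sentence of the statement.

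The nonnegativity used in the forward direction and the reduction to Theorem~\ref{Thmofindex} are immediate; the only genuine content lies in the polygon bookkeeping for the converse — especially being precise about which lattice points $\ind_{\phi_i}(f)$ counts and checking that a side of degree $\ge 2$ really does contribute — and that is where I expect the (mild) difficulty to be.
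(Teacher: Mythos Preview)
Your argument is correct. The paper does not actually supply a proof of Corollary~\ref{iffCor}; it is stated as an immediate consequence of Theorem~\ref{Thmofindex}, and your write-up fills in precisely the expected details---the forward direction from nonnegativity of the $\ind_{\phi_i}(f)$, and the converse by showing that $\ind_{\phi_i}(f)=0$ forces a one-sided polygon with degree-one (hence separable) residual polynomial, so that the equality clause of Theorem~\ref{Thmofindex} applies.
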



The second theorem we state connects prime splitting and polynomial factorization. The ``three dissections" that we will outline below are due to Ore, and the full Montes algorithm is an extension of this. Our statement loosely follows Theorem 1.7 of \cite{ElFadilMontesNart}.

\begin{theorem}\label{Thm: Ore}[Ore's Three Dissections]
Let $f(x)\in \Ocal_K[x]$ be a monic irreducible polynomial and let $\alpha$ be a root. Suppose
\[\ol{f(x)}=\phi_1(x)^{r_1}\cdots \phi_s(x)^{r_s}.\]
is a factorization into irreducibles in $k_\gp[x]$. Hensel's lemma shows $\phi_i(x)^{r_i}$ corresponds to a factor of $f(x)$ in $K_\gp[x]$ and hence to a factor $\gm_i$ of $\gp$ in $K(\alpha)$. 

Choosing a lift and abusing notation, suppose the principal $\phi_i$-polygon has sides $S_1,\dots, S_g$. Each side of this polygon corresponds to a distinct factor of $\gm_i$. 

Write $\gn_j$ for the factor of $\gm_i$ corresponding to the side $S_j$. Suppose $S_j$ has slope $-\frac{h}{e}$. If the residual polynomial $R_{S_j}(y)$ is separable, then the prime factorization of $\gn_j$ mirrors the factorization of $R_{S_j}(y)$ in $k_{\gp,\phi_i}[y]$, but every factor of $R_{S_j}(y)$ will have an exponent of $e$. In other words,
\[\text{if } R_{S_j}(y)=\gamma_1(y)\dots\gamma_k(y) \ \text{ in }  \ k_{\gp,\phi_i}[y], \ \text{ then } \ \gn_j=\gP_1^{e}\cdots \gP_k^{e} \ \text{ in } \ K(\alpha)\]
with $\deg(\gamma_m)$ equaling the residue class degree of $\gP_m$ for each $1\leq m\leq k$.
In the case where $R_{S_j}(y)$ is not separable, further developments are required to factor $\gp$. 
\end{theorem}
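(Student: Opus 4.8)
The plan is to prove the statement by base-changing to the completion $K_\gp$ and then peeling off the three layers of arithmetic information in turn, exactly as the name ``three dissections'' suggests. After base change one has $\Ocal_{K(\alpha)}\otimes_{\Ocal_K}\Ocal_{K_\gp}\cong\prod_{\gm\mid\gp}\Ocal_{K(\alpha)_\gm}$, and each completion $K(\alpha)_\gm$ equals $K_\gp[x]/(F)$ for a unique monic irreducible factor $F\mid f$ in $K_\gp[x]$; so the problem becomes one of factoring $f$ over the complete discretely valued field $K_\gp$ and reading off $e(\gm/\gp)$ and $f(\gm/\gp)$ from each factor. I would fix, once and for all, an extension of $v_\gp$ to a fixed algebraic closure $\ol{K_\gp}$, normalized so that $v_\gp(\pi)=1$ for a uniformizer $\pi$.

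\emph{First dissection.} Since the $\phi_i$ are distinct monic irreducibles in $k_\gp[x]$, the factors $\phi_i^{r_i}$ of $\ol{f}$ are pairwise coprime, so Hensel's lemma over the complete ring $\Ocal_{K_\gp}$ produces a factorization $f=F_1\cdots F_s$ in $\Ocal_{K_\gp}[x]$ with $\ol{F_i}=\phi_i^{r_i}$. Grouping the primes $\gm\mid\gp$ according to which $F_i$ the corresponding irreducible factor of $f$ divides defines the $\gm_i$; equivalently $\gm_i$ is the $\gp$-part of the ideal generated by a lift of $\phi_i(\alpha)$, and $\gp$ divides $\prod_i\gm_i$. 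This reduces everything to analyzing a single $F=F_i$ with $\ol F=\phi^r$, $\phi=\phi_i$, and $r=r_i$.

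\emph{Second dissection.} Develop $F$ $\phi$-adically, $F=\sum_j a_j(x)\phi(x)^j$, and attach its principal $\phi$-polygon $\mathcal N$. The engine here is Ore's $\phi$-Newton-polygon factorization lemma: $F$ splits over $K_\gp$ as $F=\prod_{j=1}^g F_j$, where $F_j$ corresponds to the side $S_j$ of $\mathcal N$ of slope $-h_j/e_j$ and length $l_j$, $F_j$ has degree $l_j\deg\phi$, and every root $\theta$ of $F_j$ in $\ol{K_\gp}$ satisfies $v_\gp(\phi(\theta))=h_j/e_j$. The sides being pairwise of distinct slope forces the $F_j$ to be pairwise coprime (a Krasner/Hensel-type separation argument), which is why the sides correspond to distinct factors $\gn_j$ of $\gm_i$. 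Moreover, for a root $\theta$ of $F_j$ the value $v_\gp(\phi(\theta))=h_j/e_j$ has exact denominator $e_j$, so the value group of $K_\gp(\theta)$ contains $\tfrac1{e_j}\ZZ$; hence $e_j\mid e(\gP/\gp)$ for every prime $\gP$ dividing $\gn_j$. Likewise, since $k_{\gp,\phi}=k_\gp[x]/(\phi)$ embeds into the residue field of each such $\gP$, one gets $\deg\phi\mid f(\gP/\gp)$.

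\emph{Third dissection and the main obstacle.} It remains to factor each $\gn_j$, and this is the crux of the whole argument. The idea is to normalize: for a root $\theta$ of $F_j$ the element $\gamma=\phi(\theta)^{e_j}\pi^{-h_j}$ has valuation $0$, and its image $\ol{\gamma}$ in the residue field of the prime $\gP$ through which $\theta$ factors (a field containing $k_{\gp,\phi}$) depends only on $\theta$; plugging $\theta$ into the $\phi$-adic development of $F$ and isolating the terms lying on $S_j$ shows that $\ol{\gamma}$ is a root of $R_{S_j}(y)\in k_{\gp,\phi}[y]$, and that this assignment realizes every root of $R_{S_j}$. When $R_{S_j}$ is \emph{separable} — the hypothesis we are handed — its irreducible factors $\gamma_1,\dots,\gamma_k$ over $k_{\gp,\phi}$ are distinct and separable, the reduction map clusters the roots of $F_j$ into exactly $k$ groups, and one proves $F_j=\prod_m F_{j,m}$ in $K_\gp[x]$ with $F_{j,m}$ irreducible of degree $e_j\,\deg\phi\,\deg\gamma_m$, corresponding to a single prime $\gP_m$ with $e(\gP_m/\gp)=e_j$ and relative residue degree $\deg\gamma_m$ over $k_{\gp,\phi}$. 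The bookkeeping check $\sum_m e_j\,\deg\phi\,\deg\gamma_m=\deg F_j=l_j\deg\phi$, i.e.\ $\sum_m\deg\gamma_m=d_j=\deg R_{S_j}$, confirms that no degree is lost, so the three dissections together account for all of $\gp$. The genuinely hard part is this last step: showing that separability of $R_{S_j}$ is exactly what rules out any further refinement (the next augmentation step of the Montes algorithm) and forces the factorization of $\gn_j$ to mirror that of $R_{S_j}$ on the nose — this residual-polynomial separation/irreducibility lemma is the technical heart of Ore's theorems. I would either reproduce that argument or, since the theorem is invoked only as a black box in the sequel, cite its complete treatment in \cite{Ore} and, in the modern formulation, \cite{GMN} and \cite{ElFadilMontesNart}.
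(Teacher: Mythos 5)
The paper gives no proof of this theorem: it is stated as background, attributed to Ore, and explicitly sourced to Theorem 1.7 of \cite{ElFadilMontesNart} (with \cite{GMN} and \cite{Ore} for the full development), so there is nothing in the paper to compare your argument against line by line. That said, your outline is the standard and correct architecture of the classical proof: completion at $\gp$ and the bijection between primes above $\gp$ and irreducible factors of $f$ in $K_\gp[x]$; Hensel separation of the coprime factors $\phi_i^{r_i}$; the $\phi$-Newton-polygon factorization with $v_\gp(\phi(\theta))=h_j/e_j$ giving $e_j\mid e(\gP/\gp)$ and $\deg\phi\mid f(\gP/\gp)$; and the normalization $\phi(\theta)^{e_j}\pi^{-h_j}$ whose residue is a root of $R_{S_j}$. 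Your degree bookkeeping $\sum_m e_j\deg\phi\deg\gamma_m=l_j\deg\phi$ is right. You also correctly identify that the entire weight of the theorem rests on the final step — that separability of $R_{S_j}$ forces $F_j$ to split into irreducibles matching the $\gamma_m$ with ramification exactly $e_j$ — and you propose to cite that step rather than prove it. That is a legitimate choice here, and indeed it is exactly what the paper does for the whole theorem; just be aware that as written your text is a proof sketch with the technical heart outsourced, not a self-contained proof.
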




\section{General Radical Extensions and Lemmas}\label{genradsection}

In this section we recall and restate Theorem 6.1 of \cite{SmithRadical}. This Theorem will be a key lemma in our study of the monogenicity of $f^n(x)$. The notation in this section follows \cite{SmithRadical} and should not be conflated with our notation elsewhere.

Consider an arbitrary number field $L$ and an element $\alpha\in \Ocal_L$ such that $x^n-\alpha$ is irreducible over $L$. To avoid trivialities, we assume $n\geq 2$. One computes
\begin{equation}\label{discf}
\Disc\big(x^n-\alpha\big)=\left(-1\right)^\frac{n^2-n}{2}n^n(-\alpha)^{n-1}.
\end{equation}
The primes dividing this discriminant are the primes for which $\Ocal_L\left[\sqrt[n]{\alpha}\right]$ may fail to be maximal.

For a prime ideal $\gp$ of $\Ocal_L$ dividing $n$, we write $p$ for the residue characteristic and $f$ for the residue class degree. If $\gp$ divides $n$, we factor $n=p^em$ with $\gcd(m,p)=1$. 
Let $\varepsilon$ be congruent to $e$ modulo $f$ with $1\leq \varepsilon\leq f$, and define $\beta$ to be $\alpha$ to the power $p^{f-\varepsilon}$:
\[\beta\coloneqq \alpha^{p^{f-\varepsilon}} .\] 
By construction $\beta$ is the $p^e$-th root of $\alpha$ modulo $\gp$. 

\begin{theorem}[Theorem 6.1 of \cite{SmithRadical}]\label{Thm: SmithRadical}
If $\gp\mid \alpha$, then the order $\Ocal_L\left[\sqrt[n]{\alpha}\right]$ is $\gp$-maximal if and only if $v_\gp( \alpha)=1$. If $\gp\mid n$ and $\gp\nmid \alpha$, then the order $\Ocal_L\left[\sqrt[n]{\alpha}\right]$ is $\gp$-maximal if and only if \[v_\gp\left(\alpha-\beta^{p^e} \right)=1. \]
If $\gp\dnd \alpha n$, then $\Ocal_L\left[\sqrt[n]{\alpha}\right]$ is $\gp$-maximal.
\end{theorem}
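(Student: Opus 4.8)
The plan is to decide $\gp$-maximality of $\Ocal_L[\gamma]$, where $\gamma\coloneqq\sqrt[n]{\alpha}$, via Ore's machinery. By Corollary~\ref{iffCor}, $\Ocal_L[\gamma]$ is $\gp$-maximal if and only if $\ind_{\phi_i}(x^n-\alpha)=0$ for every monic lift $\phi_i$ of an irreducible factor of $\ol{x^n-\alpha}\in k_\gp[x]$, so each of the three clauses reduces to a single $\phi$-adic Newton-polygon computation. The last clause is immediate: if $\gp\nmid\alpha n$ then by \eqref{discf} the discriminant $\Disc(x^n-\alpha)$ is a $\gp$-unit, hence $\gp\nmid[\Ocal_{L(\gamma)}:\Ocal_L[\gamma]]$ and $\Ocal_L[\gamma]$ is $\gp$-maximal.

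For the clause $\gp\mid\alpha$ we have $\ol{x^n-\alpha}=x^n$, so the only relevant $\phi$ is $x$, and the $x$-adic development of $x^n-\alpha$ is already $x^n+0\cdot x^{n-1}+\dots+0\cdot x+(-\alpha)$. Thus the principal $x$-polygon is the single segment from $(0,v_\gp(\alpha))$ to $(n,0)$, and a direct count shows it carries a point $(i,j)$ with $i,j\geq1$ on or below it exactly when $v_\gp(\alpha)\geq2$; hence $\ind_x(x^n-\alpha)=0$ precisely when $v_\gp(\alpha)=1$. In that case the unique side has slope $-1/n$, so its residual polynomial has degree $n/n=1$ and is automatically separable, and Theorem~\ref{Thmofindex} gives equality, so $\Ocal_L[\gamma]$ is $\gp$-maximal; when $v_\gp(\alpha)\geq2$ the same theorem gives $v_\gp([\Ocal_{L(\gamma)}:\Ocal_L[\gamma]])\geq\ind_x(x^n-\alpha)\geq1$. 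This argument is insensitive to whether $\gp\mid n$, so it also covers the overlap with the next clause.

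The substantive clause is $\gp\mid n$ with $\gp\nmid\alpha$; write $n=p^em$ with $p\nmid m$. Since Frobenius is a bijection of $k_\gp$, there is a unique $\ol\beta\in k_\gp$ with $\ol\beta^{p^e}=\ol\alpha$, and one checks that $\beta=\alpha^{p^{f-\varepsilon}}$ reduces to it; working in characteristic $p$ then gives $\ol{x^n-\alpha}=(x^m-\ol\beta)^{p^e}$. As $p\nmid m$ and $\ol\beta\neq0$ the polynomial $x^m-\ol\beta$ is separable, so it factors into distinct monic irreducibles $\phi_1,\dots,\phi_s$ and $\ol{x^n-\alpha}=\prod_{i=1}^s\phi_i^{p^e}$. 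Fix $i$, let $x^m-\beta=\Phi_i\Psi_i$ be the Hensel factorization over $(\Ocal_L)_\gp$ with $\ol{\Phi_i}=\phi_i$, and substitute $x^m=\beta+\Phi_i\Psi_i$ into $x^n-\alpha=(x^m)^{p^e}-\alpha$ to get
\[x^n-\alpha=(\beta^{p^e}-\alpha)+\sum_{j=1}^{p^e}\binom{p^e}{j}\beta^{p^e-j}\,\Psi_i(x)^j\,\Phi_i(x)^j.\]
Rewriting the right-hand side as an honest $\Phi_i$-adic development, the constant term is exactly $\beta^{p^e}-\alpha$ (every other summand is divisible by $\Phi_i$), so the principal $\Phi_i$-polygon runs from $(0,H)$ to $(p^e,0)$ with $H\coloneqq v_\gp(\alpha-\beta^{p^e})\geq1$; and since $p\mid\binom{p^e}{j}$ for $1\leq j\leq p^e-1$ while $\beta$ and $\Psi_i$ have trivial $\gp$-content, the coefficients $a_j$ with $1\leq j\leq p^e-1$ satisfy $v_\gp(a_j)\geq v_\gp(p)\geq1$. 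The same lattice-point count as before then shows $\ind_{\Phi_i}(x^n-\alpha)=0$ if and only if $H=1$, in which case the unique side has slope $-1/p^e$ and a degree-$1$ residual polynomial, so Theorem~\ref{Thmofindex} once more gives equality. Since $H$ is independent of $i$, $\Ocal_L[\gamma]$ is $\gp$-maximal precisely when $v_\gp(\alpha-\beta^{p^e})=1$.

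I expect the bookkeeping in this last clause to be the crux: one must verify that rewriting the displayed identity as the genuine $\Phi_i$-adic development drags no interior point below the segment from $(0,H)$ to $(p^e,0)$ when $H=1$ (so the polygon is truly one-sided, with no positive-integer lattice point on or below it), while for $H\geq2$ such a lattice point does appear, and one must confirm the residual polynomials are separable so that Theorem~\ref{Thmofindex} yields an equality and not merely a lower bound. A clean way to keep the first of these transparent is to pass to the unramified extension $L'$ of $L_\gp$ generated by a root $\theta$ of $\Phi_i$: there $\theta^m=\beta$ exactly, $m\theta^{m-1}$ is a unit, the outer polynomial becomes $x^{p^e}$ minus a unit over $\Ocal_{L'}$, and the slope-and-length count collapses to the single-variable situation already handled.
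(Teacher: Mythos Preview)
The paper does not prove this statement at all: Theorem~\ref{Thm: SmithRadical} is simply quoted from \cite{SmithRadical} as an external input, with no argument given, so there is no ``paper's own proof'' to compare your attempt against.

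That said, your Ore-style proof is essentially the right approach and would reconstruct the cited result. The discriminant argument for $\gp\nmid \alpha n$ and the $x$-adic polygon for $\gp\mid\alpha$ are clean and complete. In the third clause your identification of the constant term of the $\Phi_i$-adic development as $\beta^{p^e}-\alpha$ is correct (it is literally the remainder of $x^n-\alpha$ modulo $\Phi_i$, since $x^m\equiv\beta\bmod\Phi_i$), and the observation that $\ol{x^n-\alpha}=\prod\phi_i^{p^e}$ forces $v_\gp(a_j)\geq 1$ for $0\leq j\leq p^e-1$ and $v_\gp(a_{p^e})=0$ is exactly what pins down the endpoints $(0,H)$ and $(p^e,0)$. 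You are also right that when $H=1$ every intermediate point $(j,v_\gp(a_j))$ lies strictly above the segment of slope $-1/p^e$, so the polygon is one-sided with a linear residual polynomial. For $H\geq 2$ your phrase ``the same lattice-point count'' hides a small case split that you should make explicit: either the first side ends at some $(j_1,v_\gp(a_{j_1}))$ with $1\leq j_1<p^e$, in which case $(j_1,1)$ is a positive lattice point on or under the polygon since $v_\gp(a_{j_1})\geq 1$; or the polygon is a single segment, in which case $(1,1)$ lies on or under it because $H(p^e-1)/p^e\geq 1$. Either way $\ind_{\Phi_i}\geq 1$, and the inequality in Theorem~\ref{Thmofindex} already suffices for non-maximality without any separability check.
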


When the base field is $\QQ$, one can use results in \cite{Westlund}, \cite{Alden}, or \cite{JKSTrinomialIndex}. However, for our work on iterates, we will employ the generality of Theorem \ref{Thm: SmithRadical}. We will also have the following lemmas.

\begin{lemma}\label{Lem: ExpDoesntMatter}
Let $p$ be a prime and let $a\in \ZZ$, then 
\[v_p\left(a^p-a\right) = v_p\left(a^{p^m}-a\right)\]
for every $m>0$.
\end{lemma}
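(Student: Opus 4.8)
The plan is to split on whether $p$ divides $a$. If $p\mid a$, I would simply factor $a^p-a=a\,(a^{p-1}-1)$ and $a^{p^m}-a=a\,(a^{p^m-1}-1)$; since $a\equiv 0\bmod p$ and $p-1,\,p^m-1\geq 1$, both $a^{p-1}-1$ and $a^{p^m-1}-1$ are congruent to $-1\bmod p$, hence are $p$-adic units. Thus both sides equal $v_p(a)$ and there is nothing more to do.

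The substantive case is $p\nmid a$. Then $a^p-a=a\,(a^{p-1}-1)$ and $a^{p^m}-a=a\,(a^{p^m-1}-1)$ with $a$ a $p$-adic unit, so it suffices to show $v_p\!\left(a^{p^m-1}-1\right)=v_p\!\left(a^{p-1}-1\right)$. By Fermat's little theorem $d:=v_p\!\left(a^{p-1}-1\right)\geq 1$, so I can write $a^{p-1}=1+p^d u$ with $p\nmid u$. Raising to the power $N:=\frac{p^m-1}{p-1}=1+p+\cdots+p^{m-1}$ gives $a^{p^m-1}=(1+p^d u)^N$, and a binomial expansion yields
\[
a^{p^m-1}-1 = N p^d u + \sum_{j=2}^{N}\binom{N}{j}p^{jd}u^j .
\]
Here $N\equiv 1\bmod p$, so $\gcd(N,p)=1$; together with $p\nmid u$ this makes the first term have valuation exactly $d$, while every remaining term has valuation at least $2d$. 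Since $d\geq 1$ we have $2d\geq d+1>d$, so no cancellation with the leading term is possible, and therefore $v_p\!\left(a^{p^m-1}-1\right)=d=v_p\!\left(a^{p-1}-1\right)$, which finishes the proof.

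The only point that needs any care — and the closest thing to an obstacle — is confirming that the higher-order binomial terms cannot cancel the leading term $N p^d u$; this reduces to the inequality $jd>d$ for $j\geq 2$, which is immediate once one knows $d\geq 1$, itself guaranteed by Fermat. I would also remark that the argument is uniform in $p$: no separate treatment of $p=2$ is required, since we never invoke a lifting-the-exponent statement, only the elementary binomial estimate above.
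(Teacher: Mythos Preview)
Your proof is correct and follows essentially the same route as the paper: both reduce to comparing $v_p(a^{p-1}-1)$ with $v_p(a^{p^m-1}-1)$, write $a^{p-1}=1+p^d u$ (the paper phrases this via a base-$p$ expansion), raise to the power $N=1+p+\cdots+p^{m-1}$, and use that $N\equiv 1\bmod p$ together with the binomial expansion to conclude. Your handling of the case $p\mid a$ is more explicit than the paper's one-word dismissal, and your packaging of the higher-order terms (valuation $\geq 2d>d$) is slightly cleaner, but the argument is the same.
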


\begin{proof}
Write $w=v_p(a^p-a)$. If $p\mid a$, then this is clear. Suppose $p\nmid a$. It suffices to show that 
\[v_p\left(a^{p-1}-1\right) = v_p\left(a^{p^m-1}-1\right)\]
The smallest of Fermat's theorems tells us that the base-$p$ expansion of $a^{p-1}$ has the form 
\[a^{p-1}=1+a_wp^w+(\text{higher powers of }p)\]
where $0< a_w <p$. Clearly, 
\[v_p\left(a^{p-1}-1\right)=v_p\left(a_wp^w+(\text{ higher powers of }p)\right)=w.\]
Note $p^m-1=(p-1)(p^{m-1}+p^{m-2}+\cdots+p+1)$, so
\begin{align*}
a^{p^m-1} &= \left( a^{p-1}\right)^{p^{m-1}+p^{m-2}+\cdots+p+1}\\
&=\big(1+a_wp^w+(\text{higher powers of }p)\big)^{p^{m-1}+p^{m-2}+\cdots+p+1}\\
&=1 +\left(p^{m-1}+p^{m-2}+\cdots+p+1\right) a_wp^w + (\text{higher powers of }p).
\end{align*}
We can now see that 
\[v_p\left(a^{p^m-1}-1\right)=v_p\left(a_wp^w+(\text{higher powers of }p)\right)=w. \qedhere \]
\end{proof}


\begin{lemma}\label{Lem: Val Norms}
Let $M_\gP/L_\gp$ be a finite extension of local fields that is totally ramified of degree $n$, where we write $\gP$ and $\gp$ for the the maximal ideals of the respective valuation rings. Write $\pi_\gP$ and $\pi_\gp$ for the uniformizers, so $v_\gP\big(\pi_\gp\big)=n$. If $\alpha$ is in the valuation ring of $M_\gp$, then 
\[v_\gP(\alpha)=v_\gp\left(\Norm_{M_\gP/L_\gp}(\alpha)\right).\] 
\end{lemma}

\begin{proof}
Write $m_{\pi_\gp}(x)$ for the minimal polynomial of $\pi_\gP$ over $L_\gp$, and note that $m_{\pi_\gp}(x)$ is $\gp$-Eisenstein. Thus,
\[v_\gp\big(\Norm(\pi_\gP)\big) = v_\gp\big(m_{\pi_\gP}(0)\big) = 1.\]

If $v_\gP(\alpha)=j$, then $\alpha=u\pi_\gP^j$ with $u$ a unit. Hence
\[v_\gp\big(\Norm_{M_\gP/L_\gp}(\alpha)\big)=v_\gp\Big(\Norm_{M_\gP/L_\gp}(u)\Norm_{M_\gP/L_\gp}\big(\pi_\gP^j\big)\Big)=jv_\gp\big(\Norm_{M_\gP/L_\gp}(\pi_\gP)\big)=j. \qedhere \]
\end{proof}


\section{Necessary and Sufficient Criteria for the Monogenicity of $f^n(x)$}

Recall, we take $f(x)=x^p-a$ to be irreducible, and we write
\[f^n = \underbrace{f\circ f \circ \cdots \circ f}_{n\text{-times}}.\]
The following special case of a result of Danielson and Fein shows that we do not need to be concerned with irreducibility after the first iterate. 
\begin{proposition}[Corollary 5 of \cite{DanielsonFein}]
If $f(x)=x^p-a$ is irreducible in $\ZZ[x]$, then $f^n(x)$ is irreducible in $\ZZ[x]$ for all $n>0$.
\end{proposition}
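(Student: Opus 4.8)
The plan is to reduce the irreducibility of $f^n(x)=x^p\circ\cdots$ (the $n$-fold composite of $x^p-a$) to the standard criterion for irreducibility of $x^{p^n}-c$, using the special structure $f(x)=x^p-a$. Observe first that $f^n(x)$ is \emph{not} the same as $x^{p^n}-a$; however, I would exploit the fact that a root $\alpha_n$ of $f^n(x)$ satisfies $f^{n-1}(\alpha_n^p-a)=0$, so that $\alpha_n$ lies over a tower $K_n\supset K_{n-1}\supset\cdots\supset K_1\supset\QQ$, where $K_j=\QQ(\alpha_j)$ and each successive extension is generated by a $p$-th root: explicitly $\alpha_{j}^p = a+\alpha_{j-1}$ with $\alpha_0=0$. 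Thus $f^n(x)$ is irreducible over $\QQ$ if and only if $[\,K_n:\QQ\,]=p^n$, i.e. if and only if each step $[\,K_j:K_{j-1}\,]=p$, i.e. if and only if $x^p-(a+\alpha_{j-1})$ is irreducible over $K_{j-1}$ for every $j\le n$.

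The key step is therefore to invoke the classical irreducibility criterion for pure (radical) extensions of prime degree: for a prime $p$ and $c$ in a field $F$ (of characteristic $\neq p$ here, as $F$ is a number field), $x^p-c$ is irreducible over $F$ if and only if $c$ is not a $p$-th power in $F$ (the $p=2$ ``and, if $4\mid p$, $c\notin -4F^4$'' clause is vacuous for $p$ odd, and the odd-prime and $p=2$ cases can be handled uniformly or separately). So I need: for each $j\le n$, the element $a+\alpha_{j-1}=\alpha_j^p$ is not a $p$-th power in $K_{j-1}$. I would argue this by induction on $j$, the base case $j=1$ being exactly the hypothesis that $x^p-a=f(x)$ is irreducible over $\QQ$. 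For the inductive step, suppose $K_{j-1}$ has degree $p^{j-1}$ over $\QQ$ with $\alpha_{j-1}^p=a+\alpha_{j-2}$, and suppose for contradiction $a+\alpha_{j-1}=\gamma^p$ for some $\gamma\in K_{j-1}$.

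The main obstacle is extracting a contradiction from $a+\alpha_{j-1}=\gamma^p$ inside $K_{j-1}$. The natural tool is a ramification/valuation argument at a prime dividing $f^{j-1}(0)=\alpha_{j-1}\cdot(\text{unit?})$ — more precisely, one should look at the norm: $N_{K_{j-1}/\QQ}(\alpha_{j-1})=\pm f^{j-1}(0)$, and similarly $N_{K_{j-1}/\QQ}(a+\alpha_{j-1})=\pm f^{j-1}(-a)=\pm f^{j}(0)$ (since $f(a+\alpha_{j-1})$... wait, rather $a+\alpha_{j-1}$ is a root shift; one computes $\prod(a+\alpha_{j-1}^{(i)})$). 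If $a+\alpha_{j-1}=\gamma^p$ then taking norms forces $f^{j}(0)$ (up to sign) to be a $p$-th power in $\ZZ$, which is generally false — but this only gives a contradiction when $f^j(0)$ is not a perfect $p$-th power, which is not guaranteed by the hypotheses alone. The cleaner route, and the one I would pursue, avoids norms: since $f$ is irreducible, Danielson–Fein's actual argument presumably runs by showing directly that $a+\alpha_{j-1}\notin (K_{j-1})^p$ using that the only primes of $\QQ$ that could ramify or split badly are controlled, or by a slick descent: if $a+\alpha_{j-1}\in (K_{j-1})^p$ then applying the nontrivial $\QQ$-automorphisms of the Galois closure and using that $\alpha_{j-1}$ generates a subfield of known degree, one deduces $a+\alpha_{j-2}=\alpha_{j-1}^p\in(K_{j-2})^p$ after a norm-compatible manipulation, contradicting the inductive hypothesis that $x^p-(a+\alpha_{j-2})$ was irreducible over $K_{j-2}$. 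Making this descent precise — in particular handling the interaction between the $p$-th power condition and the non-normality of $K_{j-1}/\QQ$ — is where the real work lies, and it is exactly the content of Corollary 5 of \cite{DanielsonFein}, which we are entitled to cite; so in the paper I would simply state that the proposition is the special case $h(x)=x^p-a$ (so $d=p$, $c=-a$) of that corollary, with the field being $\QQ$, and not reproduce the argument.
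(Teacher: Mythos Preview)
Your final disposition---simply citing Corollary~5 of \cite{DanielsonFein} rather than reproducing the argument---is exactly what the paper does: the proposition is stated with attribution and no proof is given. Your exploratory sketch correctly identifies the tower-of-radical-extensions reduction and the genuine obstacle (the descent step showing $a+\alpha_{j-1}\notin (K_{j-1})^p$), but as you yourself note, completing that step is precisely the nontrivial content of the cited result, so deferring to the reference is the right call and matches the paper.
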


For shorthand, we write $\alpha_n$ for a root of $f^n(x)$, so $\alpha_1=\zeta_p^i\sqrt[p]{a}$ for some $i$. Further, write $K_n$ for $\QQ(\alpha_n)$. 
Motivated by \cite{Ruofan}, we wish to show the following:\\

\noindent\textbf{Theorem \ref{Thm: Main}.} 
\textit{Fix a natural number $N\geq 1$, then $\Ocal_{K_n}=\ZZ[\alpha_n]$ for all $n\leq N$ if and only if $a^p\not \equiv a \bmod p^2$ and $f^n(0)$ is squarefree for all $n\leq N$.}\\

We will establish this theorem via two lemmas. The first considers the prime $p$ while the second considers primes $\ell\neq p$.

\begin{lemma}\label{Lem: pmaximal}
The order $\ZZ[\alpha_n]$ is $p$-maximal for all $n>0$ if and only if $a^p\not\equiv a\bmod p^2$. When this is the case, $p$ is totally ramified in $K_n$. Moreover, if $a^p\equiv a\bmod p^2$, then $\ZZ[\alpha_1]$ is not $p$-maximal.
\end{lemma}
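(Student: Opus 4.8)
The plan is to prove the equivalence in two directions, reading off total ramification from the forward analysis.

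\emph{Forward direction and the ``moreover''.} Since every root of the irreducible polynomial $x^p-a$ yields the same order $\ZZ[\alpha_1]\cong\ZZ[x]/(x^p-a)$, it suffices to study $\ZZ[\sqrt[p]{a}]\subset\QQ(\sqrt[p]{a})$, and I would do this by invoking Theorem~\ref{Thm: SmithRadical} with $L=\QQ$, $\gp=(p)$, radicand $a$, and root-degree $p$. If $p\mid a$, then $a^p\equiv a\bmod p^2$ is equivalent to $v_p(a)\ge 2$, while Theorem~\ref{Thm: SmithRadical} makes $\ZZ[\sqrt[p]{a}]$ $p$-maximal exactly when $v_p(a)=1$. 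If $p\nmid a$, then unwinding the notation of Theorem~\ref{Thm: SmithRadical} (here $e=1$, the residue class degree of $(p)$ over $\QQ$ is $1$, so $\varepsilon=1$ and $\beta=a$) $p$-maximality becomes $v_p(a-a^p)=1$, i.e.\ $a^p\not\equiv a\bmod p^2$. In both cases $\ZZ[\alpha_1]$ fails to be $p$-maximal precisely when $a^p\equiv a\bmod p^2$; this is the ``moreover'' statement, and since it exhibits a failure of $p$-maximality already at $n=1$ it also gives the forward implication of the equivalence.

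\emph{Backward direction and total ramification.} Assume $a^p\not\equiv a\bmod p^2$; by Fermat $p\mid a^p-a$, so $v_p(a^p-a)=1$. The crux is the congruence
\[
f^n(na)\equiv a^p-a\bmod p^2\qquad\text{for all }n\ge1,
\]
which I would prove in two steps. First, $u\equiv v\bmod p$ forces $f(u)\equiv f(v)\bmod p^2$: in $f(u)-f(v)=u^p-v^p=(u-v)\sum_{k=0}^{p-1}u^{p-1-k}v^k$ both factors are divisible by $p$, and iterating yields $f^m(u)\equiv f^m(v)\bmod p^2$ for all $m$. Then I would induct on $n$, using $f^{n+1}((n+1)a)=f^n\big(((n+1)a)^p-a\big)$ together with $((n+1)a)^p-a\equiv na\bmod p$. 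This gives $v_p(f^n(na))=1$. Next I would run Ore's machinery on $f^n(x)$ at $p$: a Frobenius computation shows $\overline{f^n(x)}=(x-\overline{na})^{p^n}$ in $\FF_p[x]$, so I take the lift $\phi(x)=x-na$, and in the $\phi$-adic development $f^n(x)=\sum_{i=0}^{p^n}b_i(x-na)^i$ one has $p\mid b_i$ for $i<p^n$ while $v_p(b_0)=v_p(f^n(na))=1$. Hence the principal $\phi$-polygon is the single segment from $(0,1)$ to $(p^n,0)$: its slope is $-1/p^n$, so $e=p^n$ and the side-degree is $1$; its residual polynomial is linear, hence separable; and $\ind_\phi(f^n)=0$. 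Theorem~\ref{Thmofindex} (equivalently Corollary~\ref{iffCor}) then gives $p\nmid[\Ocal_{K_n}:\ZZ[\alpha_n]]$, and Theorem~\ref{Thm: Ore} gives $p\Ocal_{K_n}=\gP^{p^n}$ with residue degree $1$, i.e.\ $p$ is totally ramified in $K_n$.

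\emph{Main obstacle.} The work is concentrated in finding and proving $f^n(na)\equiv a^p-a\bmod p^2$; once $v_p(f^n(na))=1$ is known the Newton polygon picture is forced and the rest is routine. The one point to watch is $p\mid a$: then $na\equiv 0\bmod p$ and $\phi$ reduces to $x$, but $v_p(f^n(na))=1$ still holds because $a^p-a=a(a^{p-1}-1)$ has $p$-adic valuation exactly $v_p(a)=1$ in that regime, so a single argument handles both $p\mid a$ and $p\nmid a$.
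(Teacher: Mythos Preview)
Your proposal is correct and takes a genuinely different route from the paper. The paper splits into the cases $p\mid a$ and $p\nmid a$; in the latter it climbs the tower inductively, studying the degree-$p$ polynomial $x^p-\alpha_n-a$ over $\Ocal_{K_n}$ and computing $v_{\gp_n}(\alpha_n^p-\alpha_n)$ via a delicate recursion to show $\Ocal_{K_n}[\alpha_{n+1}]$ is $\gp_n$-maximal at each step. You instead work directly over $\ZZ$ with the full iterate $f^n(x)$: the congruence $f^n(na)\equiv a^p-a\bmod p^2$ (proved from the lifting-the-exponent fact $u\equiv v\bmod p\Rightarrow u^p\equiv v^p\bmod p^2$) pins down the constant term of the $(x-na)$-adic development, and a single one-sided Newton polygon handles both $p$-maximality and total ramification, uniformly in the cases $p\mid a$ and $p\nmid a$. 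Your argument is shorter and more elementary, and it avoids Lemma~\ref{Lem: ExpDoesntMatter} entirely. The paper's approach has the compensating advantage that it produces the relative statement $\Ocal_{K_n}[\alpha_{n+1}]=\Ocal_{K_{n+1}}$ at the prime above $p$, which is exactly what is invoked in the inductive proof of Theorem~\ref{Thm: Main}; your direct argument gives $p\nmid[\Ocal_{K_n}:\ZZ[\alpha_n]]$ without passing through that intermediate step.
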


\begin{proof}
First, we consider the case where $p\mid a$. In this case the $x$-adic development shows $\ZZ[\alpha_1]$ is $p$-maximal if and only if $a^p\not\equiv a\bmod p^2$. Thus this condition is necessary. Supposing now that $a^p\not\equiv a\bmod p^2$, a straightforward computation shows that $f^n(x)\equiv x^{p^n}\bmod p$ and $v_p(f^n(0))=v_p(a)$. Considering the $x$-adic development again, $\ZZ[\alpha_n]$ is $p$-maximal if and only if $a^{p^n}\not \equiv a \bmod p^2$ which holds if and only if $a^p\not \equiv a \bmod p^2$ by Lemma \ref{Lem: ExpDoesntMatter}. The slope of the principal $x$-polygon is $\frac{1}{p^n}$ in each of these cases, so $p$ is totally ramified. To be more succinct, in this case, each $f^n(x)$ is $p$-Eisenstein if and only if $f(x)$ is $p$-Eisenstein. 

Now we consider the more subtle case where $p\nmid a$, and we proceed by using induction. For the base case we want to understand the $p$-maximality of $f(x)=x^p-a$. Theorem \ref{Thm: SmithRadical} shows that $f(x)$ is $p$-maximal if and only if $a^p\not\equiv a \bmod p^2$. If $\ZZ[\alpha_1]$ is $p$-maximal, then since $x^p-a\equiv (x-a)^p \bmod p$, Dedekind-Kummer factorization shows that $p$ is totally ramified. 

For the induction step, take $n\geq 1$, and suppose that $\ZZ[\alpha_k]$ is $p$-maximal with $p$ totally ramified for all $k\leq n$. We have already seen that it is necessary that $v_p(a^p-a)=1$. We wish to show that this is sufficient for $\ZZ[\alpha_{n+1}]$ to be $p$-maximal with $p$ totally ramified. It suffices to show that $\Ocal_{K_n}[\alpha_{n+1}]$ is $\gp_n$-maximal over $\Ocal_{K_n}$ with $\gp_n$ totally ramified. Here $\gp_n$ is the unique prime of $\Ocal_{K_n}$ above $p$ and $p=(\gp_n)^{p^n}$. To this end, we apply the Montes algorithm to the polynomial $x^p-\alpha_n-a$, the minimal polynomial of $\alpha_{n+1}$ over $K_n$.

Note the residue field of $\gp_n$ is $\FF_p$. Hence, reducing $x^p-\alpha_n-a $ modulo $\gp_n$, we find
\[x^p-\alpha_n-a\equiv \left(x-\alpha_n-a\right)^p \bmod \gp_n.\]
If the constant coefficient of the $(x-\alpha_n-a)$-adic development of $x^p-\alpha_n-a$ has $\gp_n$-adic valuation 1, then the principal $(x-\alpha_n-a)$-polygon is one-sided with slope $-\frac{1}{p}$. This implies $\gp_n$ is totally ramified in $K_{n+1}$ and $\Ocal_{K_n}[\alpha_{n+1}]$ is $\gp_n$-maximal. We have
\[x^p=\left(x-\alpha_n-a+\alpha_n+a\right)^p=\sum_{i=0}^p \binom{p}{i} \left( x-\alpha_n-a \right)^{p-i}\left(\alpha_n+a\right)^i,\]
so
\begin{align*}
x^p-\alpha_n-a&=\sum_{i=0}^p \binom{p}{i} \left( x-\alpha_n-a \right)^{p-i}\left(\alpha_n+a\right)^i-\alpha_n-a\\
&=\sum_{i=0}^{p-1} \binom{p}{i} \left(\alpha_n+a\right)^i\left( x-\alpha_n-a \right)^{p-i}+\left(\alpha_n+a\right)^p-\alpha_n-a.
\end{align*}
From the binomial coefficients, we see the constant coefficient of the $(x-\alpha_n-a)$-adic development is 
\[\alpha_n^p-\alpha_n+a^p-a+ (\text{terms divisible by }p).\]
As $v_{\gp_n}(p)=p^n$, the terms divisible by $p$ are not relevant, and we are interested in the ${\gp_n}$-adic valuation of $\alpha_n^p-\alpha_n+a^p-a$. Since $p\mid(a^p-a)$ and $v_{\gp_n}(p)=p^n$, we want the ${\gp_n}$-adic valuation of $\alpha_n^p-\alpha_n$. We compute
\begin{align*}
v_{\gp_n}\left( \alpha_n^p-\alpha_n \right)&=v_{\gp_{n}}\left(\alpha_{n-1}+a-\alpha_n\right) & &\text{since } \alpha_n^p=\alpha_{n-1}+a, \\
&=v_{\gp_{n-1}}\left(\Norm_{K_n/K_{n-1}}\left(\alpha_{n-1}+a-\alpha_n\right)\right) & &\text{by Lemma \ref{Lem: Val Norms}}, \\
&=v_{\gp_{n-1}}\left(\prod_{i=1}^p\left(\alpha_{n-1}+a-\zeta_p^i\alpha_n\right)\right) & &\text{since } \alpha_{n-1},a\in K_{n-1}, \\
&=v_{\gp_{n-1}}\left(\left(\alpha_{n-1}+a\right)^p-\alpha_n^p\right) & &\text{using the factorization of } x^p-\alpha_{n-1}-a,\\
&=v_{\gp_{n-1}}\left(\left( \alpha_{n-1}+a \right)^p-\alpha_{n-1}-a\right) & &\text{since } \alpha_n^p=\alpha_{n-1}+a.
\end{align*} 
Note that when $n=1$, we have $K_0=\QQ$, $\gp_0=p$, and $\alpha_0=0$. In this case, the above shows 
\[v_{\gp_1}\left( \alpha_1^p-\alpha_1 \right)=v_p\left(a^p-a\right)=1.\] 
Now suppose $n>1$. Because $\Ocal_{K_{n-1}}[\alpha_{n}]$ is $\gp_{n-1}$-maximal, taking the $(x-\alpha_{n-1}-a)$-adic expansion of $x^p-\alpha_{n-1}-a$ shows 
\[v_{\gp_{n-1}}\big(\left( \alpha_{n-1}+a \right)^p-\alpha_{n-1}-a\big)=1.\]
Hence, $v_{\gp_n}\left(\alpha_n^p-\alpha\right)=1$. We see $\gp_n$ is totally ramified in $K_{n+1}$ and $\Ocal_{K_n}[\alpha_{n+1}]$ is $\gp_n$-maximal.
\end{proof}

Now we turn our attention to primes $\ell \neq p$. We roughly follow the strategy employed by \cite{Ruofan} and aim to establish the following lemma.

\begin{lemma}\label{Lem: PrimesAwayFromp}
Let $\ell$ be a prime not dividing $p$. 
\begin{enumerate}
\item\label{notmax} If $f^n(0)$ is divisible by $\ell^2$, then $\ZZ[\alpha_n]$ is not $\ell$-maximal.
\item\label{maximal} If $f^n(0)$ is not divisible by $\ell^2$, then $\Ocal_{K_{n-1}}[\alpha_n]$ is an $\gl$-maximal $\Ocal_{K_{n-1}}$-module for any prime ideal $\gl$ of $\Ocal_{K_{n-1}}$ above $\ell$.
\end{enumerate}
\end{lemma}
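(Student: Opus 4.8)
The plan is to treat the two parts via the general radical criterion of Theorem~\ref{Thm: SmithRadical}, applied to the one-step radical extension $K_n = K_{n-1}(\sqrt[p]{\alpha_{n-1}+a})$ over $K_{n-1}$, together with a descent argument that transports squarefreeness of $f^n(0)$ to valuation statements at primes of $\Ocal_{K_{n-1}}$. First I would record that $\ell \neq p$ means $\ell \nmid p = [K_n:K_{n-1}]$, so the only primes of $\Ocal_{K_{n-1}}$ where $\Ocal_{K_{n-1}}[\alpha_n]$ can fail to be $\gl$-maximal are those dividing the radicand $\alpha_{n-1}+a$; at a prime $\gl$ with $\gl \nmid p(\alpha_{n-1}+a)$ the order is automatically $\gl$-maximal by the third clause of Theorem~\ref{Thm: SmithRadical}. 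So everything reduces to understanding $v_\gl(\alpha_{n-1}+a)$ for primes $\gl \mid \ell$, and by the first clause of Theorem~\ref{Thm: SmithRadical}, $\Ocal_{K_{n-1}}[\alpha_n]$ is $\gl$-maximal exactly when $v_\gl(\alpha_{n-1}+a) = 1$ (or $v_\gl(\alpha_{n-1}+a)=0$).

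The key computation is to relate $\prod_{\gl \mid \ell} \ell^{f(\gl/\ell)\, v_\gl(\alpha_{n-1}+a)}$ to $f^n(0)$. Note $f^n(0) = f(f^{n-1}(0)) = (f^{n-1}(0))^p - a$, and also $\alpha_{n-1}+a = \alpha_n^p$; more usefully, the norm $N_{K_{n-1}/\QQ}(\alpha_{n-1}+a)$ is, up to sign, the resultant of $f^{n-1}(x)$ and $x^p - (y+a)|_{\text{eval}}$, which works out to $\pm f^n(0)$ (since $f^n(0) = \prod_{f^{n-1}(\theta)=0}(\theta^p - a)$ ranging over roots $\theta$ of... — more precisely $N_{K_{n-1}/\QQ}(f(\alpha_{n-1})) = \operatorname{Res}(f^{n-1}(x), f(x)) = \prod_{\beta:\, f(\beta)=0} f^{n-1}(\beta) = \prod f^{n-1}(\beta)$, and iterating one checks this equals $\pm f^n(0)$). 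Granting the identity $\abs{N_{K_{n-1}/\QQ}(\alpha_{n-1}+a)} = \abs{f^n(0)}$, I would argue: for part~\eqref{notmax}, if $\ell^2 \mid f^n(0)$ then $\sum_{\gl \mid \ell} f(\gl/\ell)\, v_\gl(\alpha_{n-1}+a) \geq 2$, so either some $\gl$ has $v_\gl(\alpha_{n-1}+a) \geq 2$ — making $\Ocal_{K_{n-1}}[\alpha_n]$ fail to be $\gl$-maximal by Theorem~\ref{Thm: SmithRadical} — or two distinct primes $\gl_1, \gl_2$ each have $v_{\gl_i}(\alpha_{n-1}+a) = 1$; in the latter case one shows these force a non-maximality anyway (ramification of $\ell$ in $K_n/\QQ$ that $\ZZ[\alpha_n]$ cannot see), and then combine with the behavior of $\ell$ in $K_{n-1}$ (inductively $\ell$-maximal, by applying the lemma at stage $n-1$ since $\ell^2 \nmid f^{n-1}(0)$ would follow, or handling it directly) to conclude $\ZZ[\alpha_n]$ is not $\ell$-maximal. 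For part~\eqref{maximal}, if $\ell^2 \nmid f^n(0)$ then $\sum_{\gl\mid\ell} f(\gl/\ell)\, v_\gl(\alpha_{n-1}+a) \leq 1$, so there is at most one $\gl$ with nonzero valuation, it has residue degree $1$ and valuation $1$, and Theorem~\ref{Thm: SmithRadical} gives $\gl$-maximality of $\Ocal_{K_{n-1}}[\alpha_n]$ at every $\gl \mid \ell$.

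I would handle the norm/resultant identity carefully as a separate computation: writing $f^n(x) = \prod_{i}(x - \alpha_n^{(i)})$ over the conjugates, one has $f^n(0) = \pm\prod_i \alpha_n^{(i)} = \pm N_{K_n/\QQ}(\alpha_n)$, and since $\alpha_n^p = \alpha_{n-1}+a$ and $[K_n:K_{n-1}] = p$ (using irreducibility of $x^p - (\alpha_{n-1}+a)$ over $K_{n-1}$, which holds as $f^n$ is irreducible over $\QQ$), transitivity of norms gives $N_{K_n/\QQ}(\alpha_n)^p = N_{K_{n-1}/\QQ}(N_{K_n/K_{n-1}}(\alpha_n)) = N_{K_{n-1}/\QQ}((-1)^{p-1}(\alpha_{n-1}+a)) = \pm N_{K_{n-1}/\QQ}(\alpha_{n-1}+a)$, and likewise $f^n(0)^p = \pm f^{n-1}(0)^{p^2}\cdots$ — actually the cleanest route is $f^n(0) = \prod_{\beta:\, f^{n-1}(\beta)=0}(\beta^p - a) = \pm N_{K_{n-1}/\QQ}(\alpha_{n-1}^p - a)$... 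I'd instead use $\alpha_{n-1}+a = \alpha_n^p$ directly: $N_{K_{n-1}/\QQ}(\alpha_{n-1}+a) = \prod_{\text{conj}}(\alpha_{n-1}^{(j)} + a)$, and $f^n(0) = \operatorname{Res}(f^{n-1}(x), x^p - a)$ up to sign $= \prod_j ((\alpha_{n-1}^{(j)})^{?})$... The precise bookkeeping of signs and which resultant appears is the main technical obstacle, but it is routine; alternatively one can bypass norms entirely and argue $\gl$-locally that $v_\gl(\alpha_{n-1}+a)$ is controlled by $v_\ell(f^n(0))$ by tracking how $f$ acts on valuations through the tower $\QQ \subset K_1 \subset \cdots \subset K_{n-1}$, which is the approach used for $\ell = 2$ in \cite{Ruofan} and which I expect to adapt with only cosmetic changes. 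The genuinely delicate case — and the one I would spend the most care on — is the ``two distinct primes above $\ell$ each with valuation $1$'' scenario in part~\eqref{notmax}, where non-maximality of $\ZZ[\alpha_n]$ over $\ZZ$ (as opposed to $\Ocal_{K_{n-1}}[\alpha_n]$ over $\Ocal_{K_{n-1}}$, which \emph{is} maximal there) comes from the splitting behavior and must be extracted from Ore's theorems (Theorem~\ref{Thmofindex} and Theorem~\ref{Thm: Ore}) applied to $f^n(x)$ at $\ell$, using that $\ell^2 \mid f^n(0) = \pm f^n(x)|_{x=0}$ forces the $x$-adic (or appropriate $\phi$-adic) Newton polygon to carry a positive index.
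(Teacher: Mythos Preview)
Your argument for part~\eqref{maximal} is essentially the paper's: both apply Theorem~\ref{Thm: SmithRadical} to $x^p-(\alpha_{n-1}+a)$ to reduce to showing $v_\gl(\alpha_{n-1}+a)\leq 1$, and then control this via the norm identity $\abs{N_{K_{n-1}/\QQ}(\alpha_{n-1}+a)}=\abs{f^n(0)}$. The paper phrases it as a contrapositive and routes the norm through $K_n$ using $\alpha_n^p=\alpha_{n-1}+a$, but the content is identical. Incidentally, the clean derivation of the identity you are circling around is simply $N_{K_{n-1}/\QQ}(\alpha_{n-1}+a)=(-1)^{p^{n-1}}f^{n-1}(-a)=\pm f^n(0)$, since $f(0)=-a$; no resultant bookkeeping is needed.

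For part~\eqref{notmax}, however, your detour through the tower is both unnecessary and has a real gap. In your ``delicate case'' --- two distinct primes $\gl_1,\gl_2\mid\ell$ each with $v_{\gl_i}(\alpha_{n-1}+a)=1$ (or one $\gl$ with residue degree $\geq 2$ and valuation $1$) --- Theorem~\ref{Thm: SmithRadical} says $\Ocal_{K_{n-1}}[\alpha_n]$ \emph{is} $\gl_i$-maximal, so no non-maximality is detectable in the step $K_{n-1}\subset K_n$; your parenthetical about ``ramification $\ZZ[\alpha_n]$ cannot see'' is not justified by that route, and there is no inductive hypothesis on $\ZZ[\alpha_{n-1}]$ to fall back on. You do correctly identify at the end that the $x$-adic Newton polygon of $f^n$ over $\ZZ$ at $\ell$ is what is needed --- but that argument disposes of \emph{all} of part~\eqref{notmax} in one stroke, with no case split. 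This is exactly what the paper does: since $f^n(x)\in\ZZ[x^p]$, if $\ell\mid f^n(0)$ then $x^p$ divides $\overline{f^n}$ in $\FF_\ell[x]$; the $x$-adic principal polygon has initial vertex $\big(0,v_\ell(f^n(0))\big)$ and the next nonzero abscissa is at least $p\geq 2$, so when $v_\ell(f^n(0))\geq 2$ the lattice point $(1,1)$ lies on or below the first side, forcing $\ind_x(f^n)\geq 1$ and non-$\ell$-maximality by Theorem~\ref{Thmofindex}. Lead with that and drop the tower analysis for this half.
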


\begin{proof}
We begin with \eqref{notmax}. Note that $f^n(x)\in \ZZ[x^p]$. Hence, if $\ell\mid f^n(0)$, then 
\[f^n(x)\equiv x^p\prod \phi_i(x)^{e_i}\bmod \ell.\]
Taking the $x$-adic development, we see the initial vertex of the principal $x$-polygon is $\big(0,v_\ell(f^n(0))\big)$. Thus, if $\ell^2\mid f^n(0)$, then $\ZZ[\alpha_n]$ is not $\ell$-maximal.

We turn our attention to \eqref{maximal}. We have two cases. For $n=1$, Theorem \ref{Thm: SmithRadical} shows that $\ZZ[\sqrt[p]{a}]$ is $\ell$-maximal if and only if $\ell^2$ does not divide $a$. 

Now, suppose $n>1$ and let $\gl$ be a prime of $\Ocal_{K_{n-1}}$ above $\ell$. 
Note that $x^p-\alpha_{n-1}-a$ is the minimal polynomial of $\alpha_n$ over $\Ocal_{K_{n-1}}$. Since $\gl\nmid p$, Theorem \ref{Thm: SmithRadical} shows that $\Ocal_{K_{n-1}}[\alpha_n]$ is $\gl$-maximal if and only if $\gl^2$ does not divide $(\alpha_{n-1}+a)$. We will show the contrapositive of \eqref{maximal}, so suppose $\Ocal_{K_{n-1}}[\alpha_n]$ is not an $\gl$-maximal $\Ocal_{K_{n-1}}$-module. Thus $\gl^2\mid (\alpha_{n-1}+a)$. Since $\alpha_n^p=\alpha_{n-1}+a$, we have $\gl^2\mid \big(\alpha_n^p\big)$.
Taking norms,
\[\Norm_{K_{n}/\QQ}(\gl^2) \text{ divides } \Norm_{K_{n}/\QQ}\left(\alpha_n^p\right).\]
Therefore, 
\[\left(\Norm_{K_{n-1}/\QQ}(\gl^2)\right)^p \text{ divides } |f^n(0)|^p, \text{ and } \left(\Norm_{K_{n-1}/\QQ}(\gl)\right)^2 \text{ divides } |f^n(0)|.\]
Since $\ell\mid \Norm_{K_{n-1}/\QQ}(\gl)$, we see $\ell^2$ divides $f^n(0)$. Hence, if $\Ocal_{K_{n-1}}[\alpha_n]$ is not an $\gl$-maximal $\Ocal_{K_{n-1}}$-module for some prime ideal $\gl$ of $\Ocal_{K_{n-1}}$ above $\ell$, then $\ell^2\mid f^n(0)$. The contrapositive yields \eqref{maximal}.
\end{proof}

With our lemmas established, we combine them to get the main theorem.

\begin{proof}[Proof of Theorem \ref{Thm: Main}]
Lemma \ref{Lem: PrimesAwayFromp} shows that it is necessary that $\ell^2\nmid f^n(0)$ for every $n\leq N$ and for every prime $\ell\neq p$ in order to have $\ell$-maximality. Thus, suppose $\ell^2\nmid f^n(0)$ for all $n\leq N$. As Lemma \ref{Lem: pmaximal} already shows it is necessary that $a^p\not\equiv a\bmod p$, suppose that this also holds. 

We proceed by induction. The base case is clear and already present in the literature. Suppose $N>1$ and $\Ocal_{K_k}=\ZZ[\alpha_k]$ for all $k\leq n < N$. We wish to show that $\Ocal_{K_{n+1}}=\ZZ[\alpha_{n+1}]$. Lemma \ref{Lem: PrimesAwayFromp} shows that $\Ocal_{K_{n}}[\alpha_{n+1}]$ is $\gl$-maximal for every prime $\gl$ of $\Ocal_{K_n}$ with $\gl\nmid p$. Lemma \ref{Lem: pmaximal} shows that $\Ocal_{K_{n}}[\alpha_{n+1}]$ is $p$-maximal. Thus $\Ocal_{K_{n}}[\alpha_{n+1}]=\Ocal_{K_{n+1}}$. Since $\alpha_{n+1}^p=\alpha_n+a$ and because $\Ocal_{K_n}=\ZZ[\alpha_n]$, we see that $\ZZ[\alpha_{n+1}]=\Ocal_{K_{n+1}}$. With induction we have our result.
\end{proof}



\section{Examples}

The following examples are meant to help one get a flavor for the results above.

\begin{example}
Let $f(x)=x^3-5$. We have
\begin{align*}
f^1(x) &= x^3-5,\\
f^2(x) &= x^9 - 15 x^6 + 75 x^3 - 130,\\
f^3(x) &= x^{27} - 45 x^{24} + 900 x^{21} - 10515 x^{18} + 79200 x^{15} - 399375 x^{12} + 1350075 x^{9}\\ 
& \ \ \ - 2954250 x^{6} + 3802500 x^{3} - 2197005,\\
f^4(x) &= x^{81} - 135 x^{78} + 8775 x^{75} - 365670 x^{72} + 10974150 x^{69} - 252591750 x^{66} + 4636542150 x^{63}\\ 
& \ \ \ - 69676294500 x^{60} + 873198646875 x^{57} - 9248742526140 x^{54} + 83605735086975 x^{51}\\
& \ \ \ - 649601439751125 x^{48} + 4359787949171700 x^{45} - 25355305623690000 x^{42}\\ 
& \ \ \ + 127982660067337500 x^{39} - 560741779121461875 x^{36} + 2129668434875446875 x^{33}\\
& \ \ \ - 6990730404446390625 x^{30} + 19739990955501740700 x^{27} - 47622742788031456500 x^{24}\\
& \ \ \ + 97226962675508036250 x^{21} - 165792343518924835500 x^{18} + 231863628297715627500 x^{15}\\ 
& \ \ \ - 259125463888412765625 x^{12} + 222610926346013255625 x^{9} - 138078523258432818750 x^{6} \\
& \ \ \ + 55062074290560187500 x^{3} - 10604571775299775130.
\\
\end{align*}
We can see that explicitly computing the iterates of even a relatively simple radical polynomial quickly becomes unwieldy. However, from Theorem \ref{Thm: Main}, we know that $f^n(x)$ will always be 3-maximal since $9\dnd (125-5)$. Moreover, we need only check the factorization of the constant coefficients to see that $\ZZ[\sqrt[3]{5}]=\Ocal_{K_1}$, $\ZZ[\alpha_2]=\Ocal_{K_2}$, $\ZZ[\alpha_3]=\Ocal_{K_3}$, and $\ZZ[\alpha_4]=\Ocal_{K_4}$.
\end{example}

Consider the following for an example  where not every iterate supplies a monogenerator.

\begin{example}
Take $f(x)=x^5-5$. We have
\begin{align*}
f^1(x) &= x^5-5,\\
f^2(x) &= x^{25} - 25x^{20} + 250x^{15} - 1250x^{10} + 3125x^5 - 3130
,\\
f^3(x) &= x^{125} - 125x^{120} + 7500x^{115} - \cdots + 1499675775156250000x^5 - 300415051279300005,\\
f^4(x) &= x^{625} - 625x^{620} + 193750x^{615}- \cdots - (300415051279300005)^5-5.
\\
\end{align*}

Since $5$ is prime and $3130=2\cdot5\cdot 313$ is squarefree, we see $\Ocal_{K_1}=\ZZ[\sqrt{5}]$ and $\Ocal_{K_2}=\ZZ[\alpha_2]$. However $300415051279300005=3^{5} \cdot 5 \cdot 247255186238107$. Hence, Lemma \ref{Lem: PrimesAwayFromp} shows $\Ocal_{K_3}\neq \ZZ[\alpha_3]$. In particular, $\ZZ[\alpha_3]$ is not 3-maximal.  Thus we see that the first couple of iterates being monogenic does not ensure that every iterate will be monogenic. Interestingly, $f^4(0)$ is squarefree, so $\Ocal_{K_4}=\Ocal_{K_3}[\alpha_4]$. 
However, one can show $3$ divides $\big[\Ocal_{K_4}:\ZZ[\alpha_4]\big]$.
\end{example}

These examples motivate a question that was posed to the author by Caleb Springer.

\begin{question}\label{Q: AlwaysMono} Is there an $f(x)=x^p-a$ such that $f^n(x)$ is monogenic for all $n\geq 1$?
\end{question}
A polynomial such as $f(x)=x^2+1$ seems a promising candidate. Iterates of this polynomial are always $2$-maximal, and it can be shown that $f^n(0)$ is never divisible by $p^2$ for any $p<1000$. However, it appears that any answer to Question \ref{Q: AlwaysMono} would brush against difficult problems involving squarefree values of polynomials.

\section*{Acknowledgments} The author would like to thank Kimberly Ayers for the invitation that motivated this work. The author would also like to acknowledge Caleb Springer for kindly pointing out some oversights in an earlier draft.

\bibliography{Bibliography}
\bibliographystyle{alpha}

\end{document}